\documentclass{amsart}
\usepackage{fourier,wasowicz,tikz}
\usepackage{datetime}
\info{Published: Opuscula Math. 32/3 (2012), 591--600.}


\begin{document}
\author{\SW}
\address{\SWaddr}
\email{\SWmail}
\author{Alfred Witkowski}
\address{Institute of Mathematics and Physics, University of Technology and Life Sciences, al.\ prof.\ Kaliskiego~7, 85-796 Bydgoszcz, Poland}
\email{alfred.witkowski@utp.edu.pl}
\title{On some inequality of Hermite--Hadamard type}
\keywords{%
Convex function,
Hermite--Hadamard inequality
Fej\'er inequality,
simplex,
approximate integration}
\subjclass[2010]{Primary: 26D15. Secondary: 26A51, 26B25, 65D30, 65D32}
\date{}
\begin{abstract}
 It is well--known that the left term of the classical Hermite--Hadamard ine\-quality is closer to the integral mean value than the right one. We show that in the multivariate case it is not true. Moreover, we introduce some related inequality comparing the methods of the approximate integration, which is optimal. We also present its counterpart of Fej\'er type.
\end{abstract}
\maketitle
\section{Introduction}
The Hermite--Hadamard inequality is one of the most classical inequalities in the theory of convex functions. It states that if $f:[a,b]\to\R$ is a~convex function, then
\begin{equation}\label{eq:HH}
 f\left(\frac{a+b}{2}\right)\xle\frac{1}{b-a}\int_a^b f(x)\,\text{d}x\xle\frac{f(a)+f(b)}{2}\,.
\end{equation}
This inequality is present in many textbooks and monographs devoted to convex functions and it was also extensively studied by many researchers in the past and present. Very interesting historical remarks concerning the inequality~\eqref{eq:HH} can be found in \cite{MitLac85} (see also \cite[p.~62]{NicPer06}). The following aspect of the inequality~\eqref{eq:HH} is also well--known: the inequality on the left--hand side gives better estimate of the integral mean value than the inequality on the right. It means that
\begin{equation}\label{eq:HH-better}
 \frac{1}{b-a}\int_a^b f(x)\,\text{d}x-f\left(\frac{a+b}{2}\right)\xle\frac{f(a)+f(b)}{2}-\frac{1}{b-a}\int_a^b f(x)\,\text{d}x\,,
\end{equation}
whenever $f:[a,b]\to\R$ is convex. After slight rearrangement we arrive at
\begin{equation}\label{eq:HH-better-2}
 \frac{1}{b-a}\int_a^b f(x)\,\text{d}x\xle\frac{1}{2}\left[f\left(\frac{a+b}{2}\right)+\frac{f(a)+f(b)}{2}\right]\,,
\end{equation}
which is really easy to prove, because it is enough to apply the right--hand side of~\eqref{eq:HH} to the intervals $\bigl[a,\frac{a+b}{2}\bigr]$ and $\bigl[\frac{a+b}{2},b\bigr]$ and add the obtained inequalities (\cf\ \eg\ \cite[p.~52, Remark 1.9.3]{NicPer06}). It is also easy to see by drawing a picture. However, there is also another nice geometrical proof based on the idea of symmetry. We would like to quote it here, because the mentioned above idea will be used in the paper as the main tool. To this end for a~convex function $f:[a,b]\to\R$ consider $g(x)=f(x)+f(a+b-x)$, which is also convex and, moreover, symmetric function with respect to $\frac{a+b}{2}$. See Figure~\ref{Pic1} below, where the curve symbolizes the graph of~$g$ (we silently assume nonnegativity of $g$ for the graphical purposes, but our reasoning, after slight changes, works in the general case).
\begin{figure}[ht]
 \begin{tikzpicture}[domain=0.5:3.5,>=stealth,scale=0.6]
   \draw[->] (-0.5,0)--(4,0);
   \draw[->] (0,-0.5)--(0,3.5);
   \draw[fill=gray!50] (0.5,0)--(0.5,3)--(2,0.75)--(2,0)--cycle;
   \draw (2,0.75)--(3.5,3)--(3.5,0);
   \draw plot (\x,{(\x-2)^2+0.75});
   \node[below] at (0.5,0) {\small $a$};
   \node[below] at (2,0) {$\frac{a+b}{2}$};
   \node[below] at (3.5,0) {\small $b$};
 \end{tikzpicture}
 \caption{}\label{Pic1}
\end{figure}
The integral $\int_a^b g(x)\,\text{d}x$ is not greater than double area of the trapezoid filled in gray. Hence
\[
 \int_a^b g(x)\,\text{d}x\xle\frac{b-a}{2}\left[g(a)+g\left(\frac{a+b}{2}\right)\right]\,.
\]
Taking into account $\int_a^b g(x)\,\text{d}x=2\int_a^b f(x)\,\text{d}x$ (symmetry) and the definition of~$g$, we obtain~\eqref{eq:HH-better-2}.
\par\smallskip
The multivariate counterpart of~\eqref{eq:HH} can be found in  \cite{NeuPec89,Neu90,Bes08}. In  the simpliest form it reads as follows.
\begin{thm}
 Let $T=\conv\{\,\mathbf{v}_0,\dots,\mathbf{v}_n\,\}\subset\R^n$ be the simplex with the volume $|T|$. If $f:T\to\R$ is a~convex function, then
 \begin{equation}\label{eq:HH-simplex}
  f\Biggl(\frac{1}{n+1}\sum_{i=0}^n\mathbf{v}_i\Biggr)\xle\frac{1}{|T|}\int_Tf(\mathbf{x})\,\text{\normalfont d}\mathbf{x}\xle\frac{1}{n+1}\sum_{i=0}^nf(\mathbf{v}_i)\,.
 \end{equation}
\end{thm}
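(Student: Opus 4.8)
The plan is to deduce both inequalities in~\eqref{eq:HH-simplex} from one computation concerning \emph{barycentric coordinates}. For $\mathbf{x}\in T$ write $\mathbf{x}=\sum_{i=0}^n\lambda_i(\mathbf{x})\mathbf{v}_i$ with $\lambda_i(\mathbf{x})\ge 0$ and $\sum_{i=0}^n\lambda_i(\mathbf{x})=1$; since $T$ is an $n$-dimensional simplex these coefficients are uniquely determined, the functions $\lambda_i\colon T\to\R$ are affine (hence bounded and integrable on $T$), and they are the natural vehicle for the two applications of convexity below. The key lemma I would establish first is
\[
 \int_T\lambda_i(\mathbf{x})\,\text{d}\mathbf{x}=\frac{|T|}{n+1},\qquad i=0,\dots,n.
\]
Integrating the identity $\sum_{i=0}^n\lambda_i\equiv 1$ over $T$ gives $\sum_{i=0}^n\int_T\lambda_i(\mathbf{x})\,\text{d}\mathbf{x}=|T|$, so it only remains to see that all $n+1$ of these integrals agree. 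Here I would use exactly the symmetry idea highlighted in the Introduction: for a permutation $\pi$ of $\{0,\dots,n\}$ there is a unique affine self-map $\phi_\pi$ of $\R^n$ with $\phi_\pi(\mathbf{v}_i)=\mathbf{v}_{\pi(i)}$, it maps $T$ bijectively onto $T$ (so $|\det\phi_\pi|=1$), and affinity together with $\sum\lambda_i=1$ yields $\lambda_j\circ\phi_\pi=\lambda_{\pi^{-1}(j)}$; a change of variables then gives $\int_T\lambda_{\pi^{-1}(j)}=\int_T\lambda_j$, proving the lemma.

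Given the lemma, the left inequality comes from identifying the barycenter of $T$. Indeed,
\[
 \frac{1}{|T|}\int_T\mathbf{x}\,\text{d}\mathbf{x}=\frac{1}{|T|}\sum_{i=0}^n\mathbf{v}_i\int_T\lambda_i(\mathbf{x})\,\text{d}\mathbf{x}=\frac{1}{n+1}\sum_{i=0}^n\mathbf{v}_i,
\]
so applying Jensen's inequality to the convex function $f$ and the normalised Lebesgue measure $|T|^{-1}\text{d}\mathbf{x}$ on $T$ yields $f\bigl(\tfrac{1}{n+1}\sum_{i=0}^n\mathbf{v}_i\bigr)\le\frac{1}{|T|}\int_T f(\mathbf{x})\,\text{d}\mathbf{x}$. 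Concretely one may pick a supporting affine functional $\ell\le f$ with equality at the barycenter and integrate the inequality $\ell\le f$. For the right inequality I would apply convexity pointwise to the representation $\mathbf{x}=\sum_{i=0}^n\lambda_i(\mathbf{x})\mathbf{v}_i$, obtaining $f(\mathbf{x})\le\sum_{i=0}^n\lambda_i(\mathbf{x})f(\mathbf{v}_i)$ for every $\mathbf{x}\in T$, and then integrate and use the lemma once more:
\[
 \int_T f(\mathbf{x})\,\text{d}\mathbf{x}\le\sum_{i=0}^n f(\mathbf{v}_i)\int_T\lambda_i(\mathbf{x})\,\text{d}\mathbf{x}=\frac{|T|}{n+1}\sum_{i=0}^n f(\mathbf{v}_i).
\]

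The only point I expect to require care is the lemma, and within it the verification that $\phi_\pi$ really is a measure-preserving bijection of $T$, so that the change of variables is legitimate; everything else is routine once it is noted that a convex function is continuous on the interior of $T$ and the boundary $\partial T$ is Lebesgue-null, which makes all the integrals above well defined.
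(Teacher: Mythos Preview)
Your argument is correct. The lemma $\int_T\lambda_i=\tfrac{|T|}{n+1}$ is sound (the minor abuse ``$\det\phi_\pi$'' should of course refer to the determinant of the linear part of the affine map $\phi_\pi$), and once it is in hand both inequalities follow exactly as you describe; the integrability of $f$ is secured by the supporting-functional lower bound you mention, which does extend to the boundary of~$T$.

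The route, however, is genuinely different from the paper's. The paper does not compute $\int_T\lambda_i$ at all; instead it symmetrises $f$ itself, setting $F(\mathbf{x})=\frac{1}{n+1}\sum_{\sigma\in C}f(\sigma(\mathbf{x}))$ for a cyclic subgroup $C\subset S_{n+1}$. This $F$ is convex, satisfies $F(\mathbf{b})=f(\mathbf{b})$, $F(\mathbf{v}_i)=M:=\frac{1}{n+1}\sum_i f(\mathbf{v}_i)$ for every $i$, and $\int_T F=\int_T f$; then the pointwise bounds $F(\mathbf{b})\le F(\mathbf{x})\le M$ (the first from $\mathbf{b}=\frac{1}{n+1}\sum_{\sigma}\sigma(\mathbf{x})$ and $\sigma$-invariance, the second from convexity and $F(\mathbf{v}_i)\equiv M$) integrate immediately to~\eqref{eq:HH-simplex}. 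So both proofs rest on the permutation symmetry of~$T$, but you apply it to the barycentric coordinates while the paper applies it to $f$. Your approach is more self-contained for~\eqref{eq:HH-simplex} alone; the paper's symmetrised $F$ is built because it is exactly the object needed for the main Theorem~\ref{th:HH}, where the pointwise comparison $F\le G$ with the piecewise-affine ``pyramid'' function $G$ gives the sharp bound $\mathcal{L}(f,T)\le n\,\mathcal{R}(f,T)$.
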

The proof given in~\cite{Bes08} is straightforward and elementary, but it requires a large amo\-unt of computation. As a~byproduct of our main considerations we offer an easier proof, which is also elementary.
\par\smallskip
Let us adopt the following notation:
\begin{align*}
 \mathcal{L}(f,T)&=\frac{1}{|T|}\int_Tf(\mathbf{x})\,\text{\normalfont d}\mathbf{x}
 -f\Biggl(\frac{1}{n+1}\sum_{i=0}^n\mathbf{v}_i\Biggr)\,,\\[1ex]
 \mathcal{R}(f,T)&=\frac{1}{n+1}\sum_{i=0}^nf(\mathbf{v}_i)
 -\frac{1}{|T|}\int_Tf(\mathbf{x})\,\text{\normalfont d}\mathbf{x}\,.
\end{align*}
The natural question arises, whether the inequality similar to~\eqref{eq:HH-better} holds for convex functions of multiple variables, \ie\ if the inequality $\mathcal{L}(f,T)\xle\mathcal{R}(f,T)$ is true. This conjecture turned out to be false in the multidimensional case. To demonstrate this, consider the unit simplex $T=\conv\bigl\{(0,0),(0,1),(1,0)\bigr\}\subset\R^2$. Then
\begin{align*}
 \mathcal{L}(f,T)&=2\iint_T f(x,y)\,\text{d}x\text{d}y-f\left(\frac{1}{3},\frac{1}{3}\right)\,,\\[2ex]
 \mathcal{R}(f,T)&=\frac{f(0,0)+f(0,1)+f(1,0)}{3}-2\iint_T f(x,y)\,\text{d}x\text{d}y\,.
\end{align*}
Let $f$ be the convex function, whose graph is the surface of the pyramid shown at the Figure~\ref{Pic2} (with the lower vertex $\left(\frac{1}{3},\frac{1}{3},0\right)$).
\begin{figure}[ht]
 \begin{tikzpicture}[,>=stealth,scale=1.5]
   \draw[->] (-.5,0,0)--(1.5,0,0) node [below] {$y$};
   \draw[->] (0,-.5,0)--(0,1.5,0) node [right] {$z$};
   \draw[->] (0,0,-.5)--(0,0,1.5) node [left] {$x$};
   \draw (0,0,0)--(1,0,0)--(0,0,1)--cycle;
   \draw[dashed] (0,1,0)--(1,1,0)--(0,1,1)--cycle;
   \draw (0,0,0)--(0,1,0);
   \draw (1,0,0)--(1,1,0);
   \draw (0,0,1)--(0,1,1);
   \node[below] at (1,0,0) {\small $1$};
   \node[above] at (0,1,0) {\small $\;\;\;1$};
   \node[below] at (0,0,1) {\small $1$};
   \draw (1/3,0,1/3)--(1,1,0);
   \draw (1/3,0,1/3)--(0,1,1);
   \draw (1/3,0,1/3)--(0,1,0);
   \fill [gray,opacity=.1] (1/3,0,1/3)--(0,1,0)--(0,1,1)--cycle;
   \fill [gray,opacity=.1] (1/3,0,1/3)--(1,1,0)--(0,1,0)--cycle;
   \fill [fill=gray,nearly transparent] (1/3,0,1/3)--(0,1,1)--(1,1,0)--cycle;
 \end{tikzpicture}
 \caption{}\label{Pic2}
\end{figure}
Then $\mathcal{L}(f,T)=\frac{2}{3}>\frac{1}{3}=\mathcal{R}(f,T)$. However, there are, of course, convex functions, for which  $\mathcal{L}(f,T)\xle\mathcal{R}(f,T)$. Take, for instance, the convex function $f:T\to\R$ defined by
\[
 f(x,y)=
 \begin{cases}
  1&\text{for }(x,y)\in\{(0,0),(0,1),(1,0)\}\,,\\
  0&\text{otherwise,}
 \end{cases}
\]
for which $\mathcal{L}(f,T)=0<1=\mathcal{R}(f,T)$.
\par\smallskip
Let us stay for a while with two--dimensional case and the unit simplex~$T$. Following the idea of the proof of~\eqref{eq:HH-better-2} of dividing the interval $[a,b]$ into two parts, divide~$T$ into three subsimplices by its barycenter $\left(\frac{1}{3},\frac{1}{3}\right)$. Applying the right inequality of~\eqref{eq:HH-simplex} to each of these subsimplices and summing up the obtained inequalities, after some rearrangement we arrive at
\begin{equation*}
 2\iint_T f(x,y)\,\text{d}x\text{d}y-f\left(\frac{1}{3},\frac{1}{3}\right)
 \xle2\left[\frac{f(0,0)+f(0,1)+f(1,0)}{3}-2\iint_T f(x,y)\,\text{d}x\text{d}y\right]\,.
\end{equation*}
Therefore for two--dimensional case we have the inequality $\mathcal{L}(f,T)\xle 2\mathcal{R}(f,T)$. The above example of the convex function with the pyramidal graph shows that this inequa\-lity is optimal.
\par\smallskip
In the general case, taking into account the dimension $n=2$, we can guess that\linebreak $\mathcal{L}(f,T)\xle n\mathcal{R}(f,T)$, where $T\subset\R^n$ is a (not necessarily unit) simplex and $f:T\to\R$ is convex. As a~main result we will prove that this conjecture is true. We also prove the similar result for Hermite--Hadamard--Fej\'er inequality.

\section{Definitions and basic properties}
The convex hull of $n+1$ points $\mathbf{v}_0,\ldots,\mathbf{v}_n\in\R^n$ is called a \emph{simplex}, if the vectors $\mathbf{v}_1-\mathbf{v}_0$, $\mathbf{v}_2-\mathbf{v}_0$, \dots, $\mathbf{v}_n-\mathbf{v}_0$ are linearly independent.
The points $\mathbf{v}_i$ are called \emph{vertices} of the simplex.
 Its barycentric coordinates will be   denoted by $(\xi_0,\xi_1\,\dots,\xi_n).$
 The point
\[
 \mathbf{b}=\left(\frac{1}{n+1},\ldots,\frac{1}{n+1}\right)=\frac{\mathbf{v}_0+\cdots+\mathbf{v}_n}{n+1}
\]
is called the \emph{barycenter} of $T$.

We denote by $S_n$ the group of permutations of $n$ elements.
Any $\sigma\in S_{n+1}$ generates an affine mapping $\sigma:T\to T$ by
\[
 \sigma(\xi_0,\ldots,\xi_n)=(\xi_{\sigma(0)},\ldots,\xi_{\sigma(n)})\,.
\]
For a function $f:T\to\R$ and $\sigma\in S_{n+1}$ we define the function $f_\sigma$ by
\begin{equation}\label{eq:f_sigma}
 f_\sigma(\mathbf{x})=f\bigl(\sigma(\mathbf{x})\bigr)\,.
\end{equation}
\begin{lem}\label{lem:intf=intfsigma}
 If $f:T\to\R$ is integrable, then $\int_T f(\mathbf{x})\,\text{\normalfont d}\mathbf{x}
 =\int_T f_\sigma(\mathbf{x})\,\text{\normalfont d}\mathbf{x}$.
\end{lem}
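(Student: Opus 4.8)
The plan is to use the change-of-variables formula, exploiting the fact that $\sigma$, regarded as a self-map of $T$, is an affine bijection whose linear part has determinant of absolute value $1$.

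First I would record that, since $\sigma$ permutes the vertices $\mathbf{v}_0,\dots,\mathbf{v}_n$ and is affine, it maps $T=\conv\{\mathbf{v}_0,\dots,\mathbf{v}_n\}$ bijectively onto $T$; in particular $\sigma(T)=T$. Write $\sigma(\mathbf{x})=A\mathbf{x}+\mathbf{c}$, where $A$ is the linear part. Substituting $\mathbf{y}=\sigma(\mathbf{x})$ in the change-of-variables formula applied to the constant function $1$ gives
\[
 |T|=\int_{\sigma(T)}\mathrm{d}\mathbf{y}=\int_T|\det A|\,\mathrm{d}\mathbf{x}=|\det A|\cdot|T|\,,
\]
and since $|T|>0$ (the $\mathbf{v}_i$ are genuine simplex vertices, so $\mathbf{v}_1-\mathbf{v}_0,\dots,\mathbf{v}_n-\mathbf{v}_0$ are linearly independent) we conclude $|\det A|=1$.

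Next, applying the same substitution to $f_\sigma=f\circ\sigma$ yields
\[
 \int_T f_\sigma(\mathbf{x})\,\mathrm{d}\mathbf{x}=\int_T f(\sigma(\mathbf{x}))\,\mathrm{d}\mathbf{x}=\int_{\sigma(T)}f(\mathbf{y})\,|\det A^{-1}|\,\mathrm{d}\mathbf{y}=\int_T f(\mathbf{y})\,\mathrm{d}\mathbf{y}\,,
\]
which is exactly the assertion of the lemma. (Integrability of $f_\sigma$ follows from that of $f$ together with the fact that $\sigma$ is an affine diffeomorphism.)

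I do not expect a genuine obstacle here; the only points deserving a word of care are the identification of $\sigma$ — which in barycentric coordinates literally permutes coordinates — with an honest affine bijection of $T\subset\R^n$, and the verification that $\sigma(T)=T$. If one prefers to avoid even the short volume computation for $|\det A|$, one can instead note that $\sigma$ has finite order in the group of affine automorphisms of $T$ (indeed $\sigma^{(n+1)!}$ is the identity), whence $(\det A)^{(n+1)!}=1$ and so $|\det A|=1$; in either case the change-of-variables formula closes the argument.
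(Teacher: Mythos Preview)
Your proof is correct and follows essentially the same approach as the paper: write $\sigma$ as an affine map $A\mathbf{x}+\mathbf{c}$, use $\sigma(T)=T$ together with volume preservation to get $|\det A|=1$, and then apply the change-of-variables formula. The only cosmetic differences are that the paper quotes the volume-scaling fact from a reference rather than deriving it from the constant function, and runs the substitution in the opposite direction; your alternative finite-order argument for $|\det A|=1$ is a nice extra but not needed.
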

\begin{proof}
Let $\sigma(\mathbf{x})=\Sigma\mathbf{x}+\mathbf{b}$, where $\Sigma$ is a linear mapping. Since $T=\sigma(T)$ and $|\sigma(T)|=|\det\Sigma||T|$  we conclude that $|\det\Sigma|=1$ (see e.g. \cite[Th. 5.4.8]{Loj88}).
 Changing variables in integrals we have
 \begin{align*}
	  \int_T f(\mathbf{x})\,\text{\normalfont d}\mathbf{x}
  &=\int_{\sigma(T)} f(\mathbf{x})\,\text{\normalfont d}\mathbf{x}
	=\int_T f(\sigma(\mathbf{x}))|\det\Sigma|\,\text{\normalfont d}\mathbf{x}
	=\int_T f_\sigma(\mathbf{x})\,\text{\normalfont d}\mathbf{x}\,.
 \end{align*}
\end{proof}
\begin{lem}\label{lem:fsigma_is_convex}
If $f:T\to\R$ is convex, then so is $f_\sigma$.
\end{lem}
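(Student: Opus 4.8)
The plan is to exploit the single structural fact recalled already in the proof of Lemma~\ref{lem:intf=intfsigma}: the permutation map $\sigma:T\to T$ is affine, i.e.\ $\sigma(\mathbf{x})=\Sigma\mathbf{x}+\mathbf{b}$ for a suitable linear map $\Sigma$. Convexity is preserved under precomposition with an affine map, so the statement should follow in two or three lines.

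Concretely, I would fix $\mathbf{x},\mathbf{y}\in T$ and $t\in[0,1]$, and first observe that affinity of $\sigma$ gives
\[
 \sigma\bigl(t\mathbf{x}+(1-t)\mathbf{y}\bigr)=t\,\sigma(\mathbf{x})+(1-t)\,\sigma(\mathbf{y})\,,
\]
since the translation part cancels against the coefficients summing to $1$. Then, using that $\sigma(\mathbf{x}),\sigma(\mathbf{y})\in T$ (so the right-hand side lies in $T$ and $f$ may be applied there) together with convexity of $f$,
\[
 f_\sigma\bigl(t\mathbf{x}+(1-t)\mathbf{y}\bigr)
 =f\bigl(t\,\sigma(\mathbf{x})+(1-t)\,\sigma(\mathbf{y})\bigr)
 \xle t\,f(\sigma(\mathbf{x}))+(1-t)\,f(\sigma(\mathbf{y}))
 =t\,f_\sigma(\mathbf{x})+(1-t)\,f_\sigma(\mathbf{y})\,,
\]
which is exactly the convexity of $f_\sigma$ on $T$.

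There is essentially no obstacle here; the only point that needs a word of justification is why $\sigma$ is affine with the translation-plus-linear form used above, but that is already recorded in the proof of the previous lemma, so I would simply cite it. One could alternatively phrase the whole argument at the level of barycentric coordinates, noting that permuting the coordinates $(\xi_0,\dots,\xi_n)$ is a linear operation on the coordinate simplex and hence affine on $T$; either viewpoint yields the same one-line computation.
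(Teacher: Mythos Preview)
Your proof is correct and follows exactly the paper's approach: the paper also fixes $\mathbf{x},\mathbf{y}\in T$ and $t\in(0,1)$, uses the affinity of $\sigma$ to write $\sigma(t\mathbf{x}+(1-t)\mathbf{y})=t\sigma(\mathbf{x})+(1-t)\sigma(\mathbf{y})$, and then applies convexity of $f$ to conclude. The only cosmetic difference is that the paper writes the chain of equalities and inequalities in one line without the side remarks you add.
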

\begin{proof}
 For $0<t<1$ and $\mathbf{x},\mathbf{y}\in T$ we have
 \begin{multline*}
  f_{\sigma}\bigl(t\mathbf{x}+(1-t)\mathbf{y}\bigr)
  =f\Bigl(\sigma\bigl(t\mathbf{x}+(1-t)\mathbf{y}\bigr)\Bigr)=f\bigl(t\sigma(\mathbf{x})+(1-t)\sigma(\mathbf{y})\bigr)\\
  \xle tf\bigl(\sigma(\mathbf{x})\bigr)+(1-t)f\bigl(\sigma(\mathbf{y})\bigr)
  =tf_{\sigma}(\mathbf{x})+(1-t)f_{\sigma}(\mathbf{y})\,.
 \end{multline*}
\end{proof}

\section{Main result}
Our main result is the following refinement of the classical Hermite--Hadamard inequality:
\begin{thm}\label{th:HH}
 Let $T\subset\R^n$ be a simplex with vertices $\mathbf{v}_0,\dots,\mathbf{v}_{n}$ and let $f:T\to\R$ be a~convex function. Then
 \begin{equation}\label{th:HH:ineq}
  0\xle\mathcal{L}(f,T)\xle n\mathcal{R}(f,T)\,.
 \end{equation}
 The constant $n$ in this inequality cannot be improved.
\end{thm}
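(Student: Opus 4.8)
The plan is to exploit the symmetrization idea already used in the one-dimensional proof, now averaging $f$ over the action of the full symmetric group $S_{n+1}$ on $T$. Set
\[
 \bar f(\mathbf x)=\frac{1}{(n+1)!}\sum_{\sigma\in S_{n+1}}f_\sigma(\mathbf x)\,.
\]
By Lemma~\ref{lem:fsigma_is_convex} and averaging, $\bar f$ is convex, and by Lemma~\ref{lem:intf=intfsigma} it has the same integral mean over $T$ as $f$; moreover $\bar f$ is invariant under every $\sigma\in S_{n+1}$. Crucially, both functionals are unchanged: $\mathcal L(f,T)=\mathcal L(\bar f,T)$ and $\mathcal R(f,T)=\mathcal R(\bar f,T)$, since $\bar f$ agrees with $f$ in the symmetric averages at the barycenter and at the vertices. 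So it suffices to prove the two inequalities for a convex function that is symmetric under $S_{n+1}$. The left inequality $0\le\mathcal L$ is just the left half of~\eqref{eq:HH-simplex}, so the real content is $\mathcal L\le n\mathcal R$, equivalently
\[
 \frac{1}{|T|}\int_T \bar f
 \;\le\;
 \frac{1}{n+1}\Bigl[\,\bar f(\mathbf b)+n\cdot\tfrac{1}{n+1}\textstyle\sum_i \bar f(\mathbf v_i)\,\Bigr]\cdot\frac{n+1}{n+1}\,,
\]
i.e.\ after rearrangement $\displaystyle(n+1)\,\frac{1}{|T|}\int_T\bar f\le \bar f(\mathbf b)+n\cdot\frac{1}{n+1}\sum_i\bar f(\mathbf v_i)$.

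The key step is the barycentric subdivision. Subdivide $T$ into $n+1$ congruent subsimplices $T_0,\dots,T_n$, where $T_j=\conv\{\mathbf b,\mathbf v_0,\dots,\widehat{\mathbf v_j},\dots,\mathbf v_n\}$ (replace the $j$-th vertex by the barycenter). Each has volume $|T|/(n+1)$, and $\int_T\bar f=\sum_j\int_{T_j}\bar f$. Apply the right-hand inequality of~\eqref{eq:HH-simplex} on each $T_j$:
\[
 \frac{1}{|T_j|}\int_{T_j}\bar f
 \;\le\;
 \frac{1}{n+1}\Bigl(\bar f(\mathbf b)+\sum_{i\ne j}\bar f(\mathbf v_i)\Bigr)\,.
\]
Summing over $j=0,\dots,n$ and multiplying by $|T_j|=|T|/(n+1)$, the term $\bar f(\mathbf b)$ appears $n+1$ times and each $\bar f(\mathbf v_i)$ appears $n$ times, giving
\[
 \frac{n+1}{|T|}\int_T\bar f
 \;\le\;
 \frac{1}{n+1}\Bigl((n+1)\bar f(\mathbf b)+n\sum_i\bar f(\mathbf v_i)\Bigr)
 \;=\;\bar f(\mathbf b)+\frac{n}{n+1}\sum_i\bar f(\mathbf v_i)\,,
\]
which is exactly the rearranged form of $\mathcal L\le n\mathcal R$. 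Here symmetry of $\bar f$ is what lets us sum cleanly; without it the cross-terms at the barycenter of $T_j$ would not collapse, but they do because after replacing $f$ by $\bar f$ each subsimplex contributes the symmetric average of vertex values in the same way. (One can in fact bypass symmetrization: already $\int_T f=\sum_j\int_{T_j}f$ and the right inequality of~\eqref{eq:HH-simplex} applied to each $T_j$, summed, yields the same conclusion for $f$ directly — the barycentric value $f(\mathbf b)$ is a common vertex of all $T_j$, and each $f(\mathbf v_i)$ is a vertex of exactly $n$ of them.)

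Finally, optimality: the constant $n$ cannot be lowered, as shown by a convex function whose graph is the pyramid with apex at the barycenter $\mathbf b$ over the base $T$, i.e.\ $f$ is the largest convex function with $f(\mathbf v_i)=1$ for all $i$ and $f(\mathbf b)=0$; equivalently $f(\mathbf x)=\max_i(1-(n+1)\xi_i)$ in barycentric coordinates, clamped appropriately. For this $f$ one computes $f(\mathbf b)=0$, $\frac{1}{n+1}\sum f(\mathbf v_i)=1$, and the integral mean equals $\tfrac{n}{n+1}$ by a direct (or symmetry-based) computation, so $\mathcal L(f,T)=\tfrac{n}{n+1}$ and $\mathcal R(f,T)=\tfrac{1}{n+1}$, whence $\mathcal L=n\mathcal R$. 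I expect the main obstacle to be the clean bookkeeping in the summation over the barycentric subdivision and the verification that the functionals $\mathcal L,\mathcal R$ are genuinely invariant under symmetrization; the optimality computation of the integral mean of the pyramid function is the other place where care is needed, though it succumbs to evaluating $\int_T \xi_i\,\mathrm d\mathbf x$ and exploiting the $S_{n+1}$-symmetry.
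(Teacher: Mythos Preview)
Your argument is correct and reaches the theorem by a genuinely different route from the paper. The paper symmetrizes $f$ over a cyclic group to a function $F$, then constructs an explicit pointwise upper bound $G(\mathbf x)=(n+1)\min_j\{\xi_j\}\,F(\mathbf b)+\bigl(1-(n+1)\min_j\{\xi_j\}\bigr)M$ (the ``pyramid'' function), proves $F\le G$ by writing each $\mathbf x$ as a convex combination of $\mathbf b$ and $n$ vertices, and integrates $G$ via the cone-volume formula. You instead subdivide $T$ into the $n+1$ subsimplices $T_j$ through the barycenter and apply the right half of~\eqref{eq:HH-simplex} on each piece; this is exactly the approach the paper sketches in the introduction for $n=2$ and then sets aside in favor of the pointwise-bound proof. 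Your parenthetical remark is right on target: the symmetrization is entirely dispensable for the subdivision argument, since $f(\mathbf b)$ is already a common vertex value of all $T_j$ and each $f(\mathbf v_i)$ appears in exactly $n$ of them. Two minor points: the $T_j$ have equal volume $|T|/(n+1)$ but are not \emph{congruent} unless $T$ is regular; and no ``clamping'' is needed in the extremal example, since $\max_i\bigl(1-(n+1)\xi_i\bigr)=1-(n+1)\min_i\xi_i\ge 0$ on all of $T$. The trade-off is that the paper's approach is self-contained (it reproves~\eqref{eq:HH-simplex} along the way) and carries over cleanly to the Fej\'er-weighted version in the next section, whereas your argument is shorter but takes~\eqref{eq:HH-simplex} as a black box.
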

\begin{proof}
The inequality $\mathcal{L}(f,T)\xge 0$ follows trivially by~\eqref{eq:HH-simplex}.

Denote by $\mathbf{b}$ the barycenter of $T$ and let $M=\frac{1}{n+1}\bigl(f(\mathbf{v}_0)+\dots+f(\mathbf{v}_n)\bigr)$. Let $\sigma$ be a cyclic permutation of order $n+1$ and $C$ be the subgroup of $S_{n+1}$ generated by $\sigma$.
Define
\begin{equation}\label{eq:F-def}
 F(\mathbf{x})=\frac{1}{n+1}\sum\limits_{\sigma\in C} f_\sigma (\mathbf{x})\,.
\end{equation}
Then
\begin{equation}\label{prop:F_f(b)}
 F(\mathbf{b})=f(\mathbf{b})\,.
\end{equation}
By~\eqref{eq:f_sigma} we get
\begin{equation}\label{prop:F_inv}
 F\bigl(\sigma(\mathbf{x})\bigr)=F(\mathbf{x})\,.
\end{equation}
The formula~\eqref{eq:F-def} gives us
\begin{equation}\label{prop:F_M}
 F(\mathbf{v}_i)=\frac{f(\mathbf{v}_0)+\dots+f(\mathbf{v}_n)}{n+1}=M\,,\quad i=0,\dots,n\,,
\end{equation}
while Lemma \ref{lem:intf=intfsigma} (together with~\eqref{eq:F-def}) yields
\begin{equation}\label{prop:F_int_F}
 \int_T F(\mathbf{x})\,\text{\normalfont d}\mathbf{x}=\int_T f(\mathbf{x})\,\text{\normalfont d}\mathbf{x}\,.
\end{equation}
By Lemma \ref{lem:fsigma_is_convex} the function $F$ is convex. Of course,
\[
 \frac{1}{n+1}\sum_{\sigma\in C}\sigma(\mathbf{x})=\mathbf{b}\,,\quad\mathbf{x}\in T\,,
\]
whence for any $\mathbf{x}=\xi_0\mathbf{v}_0+\dots+\xi_n\mathbf{v}_n\in T$ we have
\begin{align}
 F(\mathbf{b})&=F\Biggl(\frac{1}{n+1}\sum_{\sigma\in C} \sigma(\mathbf{x})\Biggr)
 \xle\frac{1}{n+1}\sum_{\sigma\in C} F\bigl(\sigma(\mathbf{x})\bigr)=F(\mathbf{x})\label{ineq:F(b)<F(x)}\\
 \intertext{and}
 F(\mathbf{x})&=F(\xi_0 \mathbf{v}_0+\dots+\xi_n \mathbf{v}_n)
 \xle \xi_0F(\mathbf{v}_0)+\dots+\xi_n F(\mathbf{v}_n)=M\,.\label{ineq:F(x)<M}
\end{align}
\par\noindent
Consider the function
\[
 G(\mathbf{x})=(n+1)\min_{0\xle j\xle n}\{\xi_j\}F(\mathbf{b}) + \bigl(1-(n+1)\min_{0\xle j\xle n}\{\xi_j\}\bigr)M\,.
\]
Note that $G(\mathbf{b})=F(\mathbf{b})$ and $G\equiv M$ on the boundary of $T$ (since at least one of the barycentric coordinates vanishes here). Our goal is to show that for  all $\mathbf{x}\in T$ the inequality $F(\mathbf{x})\xle G(\mathbf{x})$ holds. To this end fix $\mathbf{x}=\xi_0\mathbf{v}_0+\dots+\xi_n\mathbf{v}_n\in T$ and suppose, that $\xi_i=\min_j\{\xi_j\}$. Then, taking into account that $\mathbf{b}=\frac{1}{n+1}\sum_{j} \mathbf{v}_{j}$ and $\sum_j\xi_j=1$, we can write
\[
 \mathbf{x}=\sum_{j} \xi_j\mathbf{v}_{j}=(n+1)\xi_i \mathbf{b} +\sum_{j\neq i} (\xi_j-\xi_i) \mathbf{v}_j\,.
\]
Clearly all the coefficients are nonnegative and sum up to $1$, so the convexity and~\eqref{prop:F_M} yield
\begin{equation}
 F(\mathbf{x})\xle(n+1)\xi_i F(\mathbf{b}) +\sum_{j\neq i} (\xi_j-\xi_i) F(\mathbf{v}_j)
 =(n+1)\xi_i F(\mathbf{b}) +\bigl(1-(n+1)\xi_i\bigr)M=G(\mathbf{x}).
\label{ineq:F<G}
\end{equation}
We claim that the function $G$ is convex. To check this fix $i\in\{0,\dots,n\}$. Since the barycentric coordinate $\xi_i$ of $\mathbf{x}=\xi_0\mathbf{v}_0+\dots+\xi_n\mathbf{v}_n\in T$ is constant on hyperplanes parallel to the face of~$T$ opposite to the vertex $\mathbf{v}_i$, the function $h_i(\mathbf{x})=\min\Bigl(\frac{1}{n+1},\xi_i\Bigr)$ is concave. Thus the mapping $\mathbf{x}=\xi_0\mathbf{v}_0+\dots+\xi_n\mathbf{v}_n\mapsto\min_i h_i(\mathbf{x}) =\min_j\{\xi_j\}$ is also concave. But $G(\mathbf{x})=M-\bigl(M-F(\mathbf{b})\bigr)\min_j\{\xi_j\}$, hence~$G$ is convex (remember that $F(\mathbf{b})\xle M$). Moreover, the above argument shows that the graph of~$G$ coincides with the lateral surface of the pyramid in $\R^{n+1}$ with base $(T,M)$ and apex $\bigl(\mathbf{b},F(\mathbf{b})\bigr)$ (in the case $n=2$ see Figure~\ref{Pic2}). Therefore
\begin{equation}\label{eq:LG}
 \int_T G(\mathbf{x})\,\text{\normalfont d}\mathbf{x}
 =|T|M-\frac{1}{n+1}|T|\bigl(M-F(\mathbf{b})\bigr)
 =|T|\left(\frac{n}{n+1}M + \frac{1}{n+1}F(\mathbf{b})\right)\,.
\end{equation}
Then integrating \eqref{ineq:F<G} over $T$  we obtain
\[
 \int_T F(\mathbf{x})\,\text{\normalfont d}\mathbf{x}\xle |T|\left(\frac{n}{n+1}M + \frac{1}{n+1}F(\mathbf{b})\right)\,,
\]
which, taking into account \eqref{prop:F_f(b)}--\eqref{prop:F_int_F}, can be written as
\begin{align*}
	\frac{n+1}{|T|}\int_T f(\mathbf{x})\,\text{\normalfont d}\mathbf{x}\xle n\frac{f(\mathbf{v}_0)+\dots+f(\mathbf{v}_n)}{n+1}+f(\mathbf{b})\,,
\end{align*}
or, equivalently $\mathcal{L}(f,T)\xle n\mathcal{R}(f,T)$. Clearly, by~\eqref{eq:LG} we get $\mathcal{L}(G,T)= n\mathcal{R}(G,T)$, so the inequality~\eqref{th:HH:ineq} is optimal.
\end{proof}

\begin{rem}
 As we could see above, the inequality~\eqref{th:HH:ineq} is sharp. For that reason in the multivariate case the left inequality of~\eqref{eq:HH-simplex} does not (in general) estimate the integral mean value better than the right one. The counterexample could be, of course, the exam\-ple given at the end of the above proof.
\end{rem}

\begin{rem}
 Observe that by integrating the inequalities \eqref{ineq:F(b)<F(x)} and \eqref{ineq:F(x)<M} and taking into account the properties~\eqref{prop:F_inv}--\eqref{prop:F_M} we obtain an easy proof of the Hermite--Hadamard inequality~\eqref{eq:HH-simplex}. In the authors' opinion this proof is easier than the proof given in~\cite{Bes08}.
\end{rem}

\section{Fej\'er version}
The classical Fej\'er version of the Hermite--Hadamard inequality (\cf{}~\cite{Fej06}, see also~\cite{DraPea02,MitLac85,PecProTon92}) states that if $f:[a,b]\to\R$ is convex and $g:[a,b]\to\R$ is nonnegative, integrable and symmetric with respect to the midpoint of $[a,b]$, then
\begin{equation*}
 f\left(\frac{a+b}{2}\right)\int_a^b g(x)\,\text{\normalfont d}x
 \xle \int_a^b f(x)g(x)\,\text{\normalfont d}x
 \xle \frac{f(a)+f(b)}{2}\int_a^b g(x)\,\text{\normalfont d}x\,.
\end{equation*}
In this section we reprove analogous result for simplices and provide some estimates for the upper and lower bounds. We start with the multivariate version of Fej\'er's inequality. It was proved, for instance, in~\cite{Neu90}. We provide an alternative proof.
\begin{thm}\label{th:Fejer}
 Let $T$ be a simplex with vertices $\mathbf{v}_0,\dots,\mathbf{v}_{n}$, $f:T\to\R$ be a convex function and $g:T\to\R$ be a nonnegative integrable function such that $g\bigl(\sigma(\mathbf{x})\bigr)=g(\mathbf{x})$ holds for certain cyclic permutation $\sigma\in S_{n+1}$. Then
 \begin{equation*}
  f\Biggl(\frac{1}{n+1}\sum_{i=0}^n\mathbf{v}_i\Biggr)\int_T g(\mathbf{x})\,\text{\normalfont d}\mathbf{x}
  \xle\int_T f(\mathbf{x})g(\mathbf{x})\,\text{\normalfont d}\mathbf{x}
  \xle\Biggl(\frac{1}{n+1}\sum_{i=0}^nf(\mathbf{v}_i)\Biggr)\int_T g(\mathbf{x})\,\text{\normalfont d}\mathbf{x}\,.
 \end{equation*}
\end{thm}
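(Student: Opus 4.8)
The plan is to re-use the symmetrization device from the proof of Theorem~\ref{th:HH}, the only genuinely new ingredient being a weighted version of Lemma~\ref{lem:intf=intfsigma}. Let $\sigma$ be the cyclic permutation from the hypothesis, let $C\subset S_{n+1}$ be the subgroup it generates, and set
\[
 F(\mathbf{x})=\frac{1}{n+1}\sum_{\tau\in C} f_\tau(\mathbf{x})
\]
as in~\eqref{eq:F-def}. By Lemma~\ref{lem:fsigma_is_convex} the function $F$ is convex, and writing $\mathbf{b}$ for the barycenter and $M=\frac{1}{n+1}\sum_{i=0}^n f(\mathbf{v}_i)$, the computations leading to \eqref{prop:F_f(b)}, \eqref{prop:F_M}, \eqref{ineq:F(b)<F(x)} and \eqref{ineq:F(x)<M} carry over unchanged and give $F(\mathbf{b})=f(\mathbf{b})$, $F(\mathbf{v}_i)=M$ for all $i$, and the pointwise two-sided estimate
\[
 f(\mathbf{b})=F(\mathbf{b})\le F(\mathbf{x})\le M,\qquad \mathbf{x}\in T.
\]

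Next I would establish the weighted analogue of Lemma~\ref{lem:intf=intfsigma}, namely
\[
 \int_T f(\mathbf{x})g(\mathbf{x})\,\text{\normalfont d}\mathbf{x}=\int_T F(\mathbf{x})g(\mathbf{x})\,\text{\normalfont d}\mathbf{x}.
\]
Fixing $\tau\in C$ and writing $\tau(\mathbf{x})=\Sigma\mathbf{x}+\mathbf{c}$ with $\tau(T)=T$ and $|\det\Sigma|=1$ (exactly as in the proof of Lemma~\ref{lem:intf=intfsigma}), the substitution $\mathbf{y}=\tau(\mathbf{x})$ gives
\[
 \int_T f_\tau(\mathbf{x})g(\mathbf{x})\,\text{\normalfont d}\mathbf{x}
 =\int_T f\bigl(\tau(\mathbf{x})\bigr)g(\mathbf{x})\,\text{\normalfont d}\mathbf{x}
 =\int_T f(\mathbf{y})\,g\bigl(\tau^{-1}(\mathbf{y})\bigr)\,\text{\normalfont d}\mathbf{y}
 =\int_T f(\mathbf{y})g(\mathbf{y})\,\text{\normalfont d}\mathbf{y},
\]
the last step because invariance of $g$ under $\sigma$ forces invariance under every element of $C$, in particular under $\tau^{-1}$. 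Averaging over $\tau\in C$ then yields the displayed identity. I expect this to be the only point requiring any care: one has to observe that $\sigma$-invariance of $g$ propagates to the whole cyclic group $C$ (so that it may be applied to $\tau^{-1}$), while the modulus of the Jacobian is $1$ exactly as before; nonnegativity of $g$ is not used here.

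Finally, multiplying the pointwise bound $f(\mathbf{b})\le F(\mathbf{x})\le M$ by $g(\mathbf{x})\ge 0$ and integrating over $T$ yields
\[
 f(\mathbf{b})\int_T g(\mathbf{x})\,\text{\normalfont d}\mathbf{x}
 \le\int_T F(\mathbf{x})g(\mathbf{x})\,\text{\normalfont d}\mathbf{x}
 \le M\int_T g(\mathbf{x})\,\text{\normalfont d}\mathbf{x};
\]
substituting $\int_T Fg=\int_T fg$ from the previous step and recalling $f(\mathbf{b})=f\bigl(\frac{1}{n+1}\sum_{i=0}^n\mathbf{v}_i\bigr)$ and $M=\frac{1}{n+1}\sum_{i=0}^n f(\mathbf{v}_i)$ gives precisely the asserted chain of inequalities. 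Thus the whole proof reduces to the symmetrization already developed for Theorem~\ref{th:HH} plus the short change-of-variables computation above.
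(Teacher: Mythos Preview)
Your proposal is correct and follows essentially the same approach as the paper: symmetrize $f$ over the cyclic group to obtain $F$, use the pointwise bounds \eqref{ineq:F(b)<F(x)}--\eqref{ineq:F(x)<M}, multiply by $g\ge 0$ and integrate, and replace $\int_T Fg$ by $\int_T fg$ via a change of variables exploiting the $\sigma$-invariance of $g$. The paper phrases this last identity as $\int_T f_\sigma g=\int_T f_\sigma g_\sigma=\int_T fg$ (using $g=g_\sigma$ and then Lemma~\ref{lem:intf=intfsigma} applied to the product $fg$), which is the same computation you wrote out explicitly.
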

\begin{proof}
 The proof goes exactly along the same line as that of Theorem \ref{th:HH} with two slight differences: we multiply inequalities \eqref{ineq:F(b)<F(x)} and \eqref{ineq:F(x)<M} by $g(\mathbf{x})$ before integrating, and we use the identity
 \[
  \int_T f_\sigma(\mathbf{x})g(\mathbf{x})\,\text{d}\mathbf{x}
 =\int_T f_\sigma(\mathbf{x})g_\sigma(\mathbf{x})\,\text{d}\mathbf{x} =\int_T f(\mathbf{x})g(\mathbf{x})\,\text{d}\mathbf{x}\,.
 \]
\end{proof}
\begin{thm}
 Under hypotheses of Theorem~\ref{th:Fejer}, if
 \begin{align*}
  \mathcal{L}(f,T;g)
   &=\int_T f(\mathbf{x})g(\mathbf{x})\,\text{\normalfont d}\mathbf{x}
    -f\Biggl(\frac{1}{n+1}\sum_{i=0}^n\mathbf{v}_i\Biggr)\int_T g(\mathbf{x})\,\text{\normalfont d}\mathbf{x}\,,\\[1ex]
  \mathcal{R}(f,T;g)
  &=\Biggl(\frac{1}{n+1}\sum_{i=0}^nf(\mathbf{v}_i)\Biggr)\int_T g(\mathbf{x})\,\text{\normalfont d}\mathbf{x}
   -\int_T f(\mathbf{x})g(\mathbf{x})\,\text{\normalfont d}\mathbf{x}\,,\\[1ex]
  \Delta(f,T)&=\frac{1}{n+1}\sum_{i=0}^n f\mathbf{v}_i)-f\Biggl(\frac{1}{n+1}\sum_{i=0}^n\mathbf{v}_i\Biggr)\,,
 \end{align*}
 then
 \begin{align}
  0&\xle\Delta(f,T)\int_T g(\mathbf{x})\cdot(n+1)\min_j\{\xi_j\}\,\text{\normalfont d}\mathbf{x}
  \xle\mathcal{R}(f,t;g)\,,\label{ineq:HHF_R}\\[1ex]
  0&\xle\mathcal{L}(f,t;g)
  \xle\Delta(f,T)\int_T g(\mathbf{x})\cdot\bigl(1-(n+1)\min_j\{\xi_j\}\bigr)\,\text{\normalfont d}\mathbf{x}\,.\label{ineq:HHF_L}
 \end{align}
\end{thm}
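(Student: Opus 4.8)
The plan is to rerun the argument of Theorem~\ref{th:HH}, but to keep the pointwise estimates from its proof intact, multiply them by $g$, and only then integrate. As there, write $M=\frac{1}{n+1}\bigl(f(\mathbf{v}_0)+\dots+f(\mathbf{v}_n)\bigr)$, let $C$ be the cyclic subgroup of $S_{n+1}$ generated by the permutation $\sigma$ from the hypothesis, and set $F(\mathbf{x})=\frac{1}{n+1}\sum_{\tau\in C}f_\tau(\mathbf{x})$ as in~\eqref{eq:F-def}. By Lemma~\ref{lem:fsigma_is_convex} and~\eqref{prop:F_f(b)}, \eqref{prop:F_M} the function $F$ is convex with $F(\mathbf{b})=f(\mathbf{b})$ and $F(\mathbf{v}_i)=M$ for every $i$. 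The new point is that $g$ is invariant under the \emph{same} cyclic permutation, so the identity displayed in the proof of Theorem~\ref{th:Fejer} gives $\int_T f_\tau(\mathbf{x})g(\mathbf{x})\,\mathrm{d}\mathbf{x}=\int_T f(\mathbf{x})g(\mathbf{x})\,\mathrm{d}\mathbf{x}$ for each $\tau\in C$, and averaging over $C$ yields
\[
 \int_T F(\mathbf{x})g(\mathbf{x})\,\mathrm{d}\mathbf{x}=\int_T f(\mathbf{x})g(\mathbf{x})\,\mathrm{d}\mathbf{x}\,.
\]
Consequently $f$ may be replaced by $F$ inside every integral occurring in $\mathcal{L}(f,T;g)$, $\mathcal{R}(f,T;g)$ and (trivially) $\Delta(f,T)$.

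Next I would isolate the scalar inequalities hiding in the proof of Theorem~\ref{th:HH}. Put $\mu(\mathbf{x})=(n+1)\min_{0\xle j\xle n}\{\xi_j\}\in[0,1]$ for $\mathbf{x}=\xi_0\mathbf{v}_0+\dots+\xi_n\mathbf{v}_n\in T$; then the auxiliary function $G$ from that proof is exactly $G(\mathbf{x})=M-\mu(\mathbf{x})\,\Delta(f,T)$ (recall $M-F(\mathbf{b})=M-f(\mathbf{b})=\Delta(f,T)$), and chaining~\eqref{ineq:F(b)<F(x)}, \eqref{ineq:F(x)<M} and~\eqref{ineq:F<G} we get, for every $\mathbf{x}\in T$,
\[
 f(\mathbf{b})=F(\mathbf{b})\xle F(\mathbf{x})\xle G(\mathbf{x})=M-\mu(\mathbf{x})\,\Delta(f,T)\xle M\,.
\]
From this single chain I read off the three estimates I need: $F(\mathbf{x})-f(\mathbf{b})\xge 0$; $M-F(\mathbf{x})\xge\mu(\mathbf{x})\,\Delta(f,T)$; and, subtracting $f(\mathbf{b})$ from $F(\mathbf{x})\xle M-\mu(\mathbf{x})\,\Delta(f,T)$, also $F(\mathbf{x})-f(\mathbf{b})\xle\bigl(1-\mu(\mathbf{x})\bigr)\Delta(f,T)$.

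Finally I would multiply each of these by $g(\mathbf{x})\xge 0$, integrate over $T$, and use $\int_T Fg=\int_T fg$ to return to $f$. Since $\mathcal{R}(f,T;g)=\int_T\bigl(M-F(\mathbf{x})\bigr)g(\mathbf{x})\,\mathrm{d}\mathbf{x}$, the bound $M-F\xge\mu\,\Delta(f,T)$ produces the right-hand inequality of~\eqref{ineq:HHF_R}; the leftmost $0$ there is immediate because $\mu\xge 0$, $g\xge 0$ and $\Delta(f,T)\xge 0$ by the Hermite--Hadamard inequality~\eqref{eq:HH-simplex}. Likewise $\mathcal{L}(f,T;g)=\int_T\bigl(F(\mathbf{x})-f(\mathbf{b})\bigr)g(\mathbf{x})\,\mathrm{d}\mathbf{x}$, so $F-f(\mathbf{b})\xge 0$ gives $\mathcal{L}(f,T;g)\xge 0$ and $F-f(\mathbf{b})\xle(1-\mu)\,\Delta(f,T)$ gives the upper bound of~\eqref{ineq:HHF_L}. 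I anticipate no real difficulty; the only place that needs care is the identity $\int_T Fg=\int_T fg$, and it is exactly here that the hypothesis ``$g$ is invariant under the same cyclic permutation used to form $F$'' is indispensable --- without it the substitution of $F$ for $f$ breaks down.
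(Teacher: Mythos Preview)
Your proposal is correct and follows essentially the same route as the paper: multiply the pointwise chain $F(\mathbf{b})\xle F(\mathbf{x})\xle G(\mathbf{x})\xle M$ from the proof of Theorem~\ref{th:HH} by $g$, integrate, and use the identity $\int_T Fg=\int_T fg$ (which relies on $g$ being invariant under the same cyclic permutation). The only cosmetic differences are that the paper writes $\alpha=\int_T g\cdot(n+1)\min_j\{\xi_j\}$ and deduces $\mathcal{L}(f,T;g)\xge 0$ by citing Theorem~\ref{th:Fejer} rather than from $F\xge F(\mathbf{b})$ directly, but the substance is identical.
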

\begin{proof}
Let $\alpha=\int_T g(\mathbf{x})\cdot(n+1)\min_j\{\xi_j\}\,\text{\normalfont d}\mathbf{x}$. Multiplying \eqref{ineq:F<G} by $g$ and integrating we obtain
\[
 \int_T f(\mathbf{x})g(\mathbf{x})\,\text{d}\mathbf{x}\xle f(\mathbf{b})\alpha+M\int_T g(\mathbf{x})\,\text{d}\mathbf{x} -M\alpha
\]
which immediately yields \eqref{ineq:HHF_R}. Subtracting $f(\mathbf{b})\int_T g(\mathbf{x})\,\text{d}\mathbf{x}$ from both sides we obtain \eqref{ineq:HHF_L} (the inequality $\mathcal{L}(f,t;g)\xge 0$ follows trivially by Theorem~\ref{th:Fejer}).
\end{proof}

\begin{rem}
 The full counterpart of Theorem~\ref{th:HH:ineq} cannot be obtained. Namely, the inequa\-lity $\mathcal{L}(f,T;g)\xle n\mathcal{R}(f,T;g)$ does not hold for the arbitrarily chosen positive symmetric weight function $g$ and a~convex function~$f$.  A counterexample is quite easy to show. It is enough to take $n=1$, $T=[-1,1]$, $f(x)=g(x)=x^2$. In these settings we have
 \begin{equation*}
  \mathcal{L}(f,T;g)=\int_{-1}^1x^4\,\text{d}x=\frac{2}{5}\,,\qquad
  \mathcal{R}(f,T;g)=\int_{-1}^1x^2\,\text{d}x-\int_{-1}^1x^4\,\text{d}x=\frac{4}{15}\,.
 \end{equation*}
 Another example could be given for any simplex $T\subset\R^n$. We will show that for the arbitrarily chosen constant $N>0$ it is possible to take a~convex function $F:T\to\R$ and a~nonnegative, integrable and symmetric function $g:T\to\R$ such that $\mathcal{L}(F,T;g)>N\mathcal{R}(F,T;g)$. We only sketch the construction.
 \par\smallskip
 Let us build the convex function $F$ similarly as in the proof of Theorem~\ref{th:HH}, with the additional conditions $F(\mathbf{b})=0$, $F(\mathbf{v}_i)=1$, $i=0,\dots,n$. Then for $\mathbf{x}=\xi_0\mathbf{v}_0+\dots+\xi_n\mathbf{v}_n\in T$ we have $G(\mathbf{x})=1-(n+1)\min_j\{\xi_j\}$. Next, for $a\in(0,1)$ take $g(\mathbf{x})=\alpha\max\bigl\{G(\mathbf{x})-a,0\bigr\}$ with such factor $\alpha$, that $\int_Tg(\mathbf{x})\,\text{\normalfont d}\mathbf{x}=1$. Then $\int_TF(\mathbf{x})g(\mathbf{x})\,\text{\normalfont d}\mathbf{x}\xrightarrow[a\to 1]{}1$, so $\mathcal{L}(F,T;g)\xrightarrow[a\to 1]{}1$ and $\mathcal{R}(F,T;g)\xrightarrow[a\to 1]{}0$.
 \par\smallskip
 If $n=1$ and $T=[-1,1]$, then the above construction leads to
 \[
  F(x)=G(x)=|x|\,,\quad g(x)=\frac{\max\bigl\{|x|-a,0\bigr\}}{(1-a)^2}
 \]
 and we arrive at
 \[
  \int_{-1}^1 F(x)g(x)\,\text{\normalfont d}x=\frac{a+2}{3}\xrightarrow[a\to 1]{}1\,,
  \quad\int_{-1}^1 g(x)\,\text{\normalfont d}x=1\,.
 \]
\end{rem}
\begin{rem}
After submission of this paper it came to our attention that similar result was obtained by Mitroi and Spiridon, see the paper \cite{MitSpi11}.
\end{rem}

\end{document}